\newtheorem{thm}{Theorem}[section]
\newtheorem{cor}[thm]{Corollary}
\newtheorem{lem}[thm]{Lemma}
\theoremstyle{definition}
\theoremstyle{remark}
\newtheorem{rem}[thm]{Remark}
\newcommand{\A}{\mathcal{A}}
\newcommand{\R}{\mathbb{R}}
\newcommand{\om}{\Omega }
\newcommand{\pom}{{\partial \Omega} }
\newcommand{\omc}{ {\Omega^c} }
\newcommand{\gs}{{\Gamma_s}}
\newcommand{\gt}{{\Gamma_t}}
\newcommand{\Wgs}{{\widetilde{W}^{\frac{1}{2},2}(\gs)}}
\newcommand{\Whgs}{{\widetilde{W}_h^{\frac{1}{2},2}(\gs)}}
\newcommand{\Wg}{{W^{\frac{1}{2},2}(\pom)}}
\newcommand{\Whg}{{W_h^{\frac{1}{2},2}(\pom)}}
\newcommand{\Wmg}{{W^{-\frac{1}{2},2}(\pom)}}
\newcommand{\Whmg}{{W_h^{-\frac{1}{2},2}(\pom)}}
\newcommand{\W}{{W^{1,4}(\om)}}
\newcommand{\Wh}{{W_h^{1,4}(\om)}}
\begin{document}

\title{FE--BE coupling for a Transmission Problem Involving Microstructure\\\vskip 0.8cm}
\author{H.~Gimperlein, M.~Maischak, E.~Schrohe, E.~P.~Stephan}
\date{}

\maketitle \vskip 0.5cm
\begin{abstract}
\noindent We analyze a finite element/boundary element procedure to
solve a non-convex contact problem for the double--well potential.
After relaxing the associated functional, the degenerate
minimization problem is reduced to a boundary/domain variational
inequality, a discretized saddle point formulation of which may then
be solved numerically. The convergence of the Galerkin
approximations to certain macroscopic quantities and a corresponding
a posteriori estimate for the approximation error are discussed.
\end{abstract}

\vskip 1.0cm

\section{Introduction}\label{intro}

Adaptive finite element / boundary element procedures provide an
efficient and extensively investigated tool for the numerical
solution of uniformly elliptic transmission or contact problems.
However, models of strongly nonlinear materials often lead to
nonelliptic partial differential equations, where the standard
Hilbert space techniques are no longer appropriate to analyze the
computational methods. In a previous work \cite{scalar} we showed
that certain mixed $L^2-L^p$--Sobolev spaces provide a convenient
setting to study contact problems for monotone operators like the
$p$--Laplacian. This article extends the approach to nonconvex
functionals, discussing the prototypical model case of a
double--well potential in Signiorini and transmission contact with
the linear Laplace equation. As a proof of principle, it intends to
clarify the mathematical basis -- including well--posedness,
convergence, a priori and a simple a posteriori estimate -- of
adaptive finite element / boundary element methods in this highly
degenerate nonlinear setting. The methods readily extend to certain
systems of equations from nonlinear elasticity, frictional contact
or more elaborate a posteriori estimates.

Let $\Omega \subset \R^n$ be a bounded Lipschitz domain and $\pom =
\overline{\gt \cup \gs}$ a decomposition of its boundary into
disjoint open subsets, $\gt \neq \emptyset$. We consider the problem
of minimizing the functional
$$\Phi(u_1, u_2) = \int_\om W(\nabla u_1) +  \frac{1}{2} \int_\omc |\nabla u_2|^2
-\int_\om f u_1 - \langle t_0, u_2|_\pom\rangle$$ with nonconvex
energy density $W(F) = |F-F_1|^2\ |F-F_2|^2$ ($F_1 \neq F_2 \in
\R^n$) over the closed convex set
$$\{(u_1, u_2) \in W^{1,4}(\om) \times W^{1,2}_{loc}(\omc) : (u_1-u_2)|_\gt= u_0,\, (u_1-u_2)|_\gs \leq u_0, \, u_2 \in \mathcal{L}_2\},$$
$$\mathcal{L}_2 = \left\{\{v \in W^{1,2}_{loc}(\omc): \Delta v = 0\ \text{ in $W^{-1,2}(\omc)$,\  $v=\left\{\begin{array}{l@{}l}
      o(1)&, n=2 \\
     \mathcal{O}(|x|^{2-n})\ &, n>2
      \end{array}\right\}$}\right\}.$$
The data $f \in L^{4/3}(\om)$, $t_0 \in W^{-\frac{1}{2},2}(\pom)$
and $u_0 \in W^{\frac{1}{2},2}(\pom)$ are taken from the appropriate
spaces.

Classical exact minimizers of $\Phi$ satisfy the equations
\begin{align}
-\mathrm{div}\ DW(\nabla u_1)= f \quad \text{in $\Omega$,}\quad
\Delta u_2 =&0 \quad \text{in $\Omega^c$,} \nonumber\\
\nu \cdot DW(\nabla u_1)- \partial_\nu u_2=t_0 \quad \text{on
$\partial \Omega$,}\,\,
u_1-u_2= &u_0\,\, \text{on $\Gamma_t$,} \nonumber\\
u_1-u_2\leq u_0,\,\, \nu \cdot DW(\nabla u_1)\leq 0, \,\,\nu \cdot
DW(\nabla u_1) (u_1-u_2-u_0) =& 0 \quad
\text{on $\Gamma_s$,} \nonumber \\
\text{$+$ radiation condition for $u_2$ at $\infty$}.\nonumber &
\end{align}
Therefore, the minimization problem for $\Phi$ is a variational
formulation of a contact problem between the double--well potential
$W$ and the Laplace equation, with transmission ($\gt$) and
Signiorini ($\gs$) contact at the interface.

Nonconvex minimization problems of this type arise naturally when a
material in $\Omega$ passes the critical point of a phase transition
into a finely textured mixture of locally energetically equivalent
configurations of lower symmetry, the so--called microstructure.
Lacking convexity in $\om$, the minimum of $\Phi$ is usually not
attained. Nevertheless, it is possible and of practical interest to
extract average physical properties of the sequences minimizing
$\Phi$. Examples of such quantities include the displacement in the
exterior, stresses, the region, where minimizing sequences develop
microstructure, or also the gradient of the displacement away from
the microstructure. Crucially for the use of boundary elements, the
exterior boundary value on the interface is not affected by the
presence of microstructure.

The increasingly fine length scale of the microstructure often
prevents the direct numerical minimization, and starting with works
of Carstensen and Plech\'a\v{c} \cite{cp1, cp2} computational
approaches based on \emph{relaxed} formulations have been
considered. Relaxation amounts to replacing the nonconvex functional
by its quasi--convex envelope, in our setting the degenerate
functional
$$\Phi^{**}(u_1, u_2) = \int_\om W^{**}(\nabla u_1) +  \frac{1}{2}
\int_\omc |\nabla u_2|^2 -\int_\om f u_1 - \langle t_0,
u_2|_\pom\rangle.$$ If $A = \frac{1}{2} (F_2 - F_1)$ and $B =
\frac{1}{2} (F_1 + F_2)$, the convex integrand $W^{**}$ is given by
the formula (cf.~\cite{cp1})
$$W^{**}(F) = \left(\max \{0,|F-B|^2-|A|^2\}\right)^2 + 4
|A|^2|F-B|^2-4 (A(F-B))^2.$$ The theory of relaxation for nonconvex
integrands shows that the weak limit of any $\Phi$--minimizing
sequence minimizes $\Phi^{**}$. Macroscopic quantities like the
stress $DW^{**}$ on $\Omega$ defined by this weak limit coincide
with the averages such as the average stress $\int DW(u)\
\mathrm{d}\nu(u)$ defined by the Young measure $\nu$ associated to
the minimizing sequence. To extract the average physical properties
of sequences minimizing $\Phi$, it is thus sufficient to understand
the minimizers of the degenerately convex functional $\Phi^{**}$.

We are thus going to analyze a finite element / boundary element
scheme which numerically minimizes $\Phi^{**}$ and thereby
approximates certain macroscopic quantities independent of the
particular minimizer. Our approach is based on previous works by
Carstensen / Plech\'a\v{c} \cite{cp1, cp2} and Bartels \cite{b} for
double--well potentials with Dirichlet or Neumann boundary
conditions. Section \ref{sec::relaxed} discusses the relaxed problem
and identifies several quantities shared by its minimizers. A priori
error estimates for their computation and convergence are
established in Section \ref{sec::apriori}. Section
\ref{sec::adaptive} contains an a posteriori estimate of residual
type, on which an adaptive grid refinement strategy may be based.
\\

For later reference, we recall from \cite{cp1} the following
estimates for the relaxed double--well potential ($E,F \in \R^n$):
\begin{equation} \label{W1}
\max\{C_1 |F|^4 - C_2, 0\} \leq W^{**}(F) \leq C_3 + C_4 |F|^4,
\end{equation}
\begin{equation} \label{W2}
|DW^{**}(F)| \leq C_5 (1+|F|^3),
\end{equation}
\begin{equation} \label{W3}
|DW^{**}(F) - DW^{**}(E)|^2 \leq C_6 (1+|F|^2+|E|^2)(DW^{**}(F) -
DW^{**}(E)) (F-E),
\end{equation}
\begin{align}
8 |A|^2 |\mathbb{P}F-\mathbb{P}E|^2 + 2 \frac{Q(F)+Q(E)}{|A|} |A
(F-E)|^2+2(Q(F)-Q(E))^2 \nonumber \\ \label{W4} \leq (DW^{**}(F) -
DW^{**}(E)) (F-E)
\end{align}
where $Q(F) = \max \{0,|F-B|^2-|A|^2\}$ and $\mathbb{P}$ is the
orthogonal projection onto the subspace of vectors orthogonal to
$A$.

\vspace*{0.2cm}
\section{Analysis of the Relaxed Problem}\label{sec::relaxed}

We first outline how the minimization problem for $\Phi^{**}$ can be
reduced to a boundary--domain variational inequality. As it involves
the exterior problem, it is not affected by the nonconvex part of
the functional. See e.g.~\cite{scalar} for a more detailed
exposition.

Recall the Steklov--Poincar\'e operator
$$S : W^{\frac{1}{2},2}(\pom) \to W^{-\frac{1}{2},2}(\pom),$$ a
positive and selfadjoint operator (pseudodifferential of order $1$,
if $\pom$ is smooth) with defining property
$$\partial_\nu u_2|_\pom = - S (u_2|_\pom)$$ for solutions $u_2 \in
\mathcal{L}_2$ of the Laplace equation on $\omc$. Let $$\Wgs = \{v
\in W^{\frac{1}{2},2}(\pom) : \mathrm{supp}\ v \subset
\bar{\Gamma}_s\}, \quad  X = W^{1,4}(\om) \times \Wgs.$$ Using $S$,
an affine change of variables,
$$(u_1,u_2) \mapsto (u,v) = (u_1 - c, u_0+u_2|_\pom-u_1|_\pom) \in X,$$
for a suitable $c \in \R$ reduces the exterior part of $\Phi^{**}$
to $\gs$:
$$\Phi^{**}(u_1,u_2) = \int_\om W^{**}(\nabla u) + \frac{1}{2} \langle S(u|_\pom +v),u|_\pom
+v\rangle - \lambda(u,v) + C \equiv J(u,v) + C,$$ where
$$\lambda(u,v) = \langle t_0 +S u_0, u|_\pom + v\rangle + \int_\om f
u$$ and $C=C(u_0,t_0)$ is a constant independent of $u,v$.
Therefore, instead of $\Phi^{**}$ one may equivalently minimize $J$
over
$$\mathcal{A} = \{(u,v) \in X : v \geq 0 \text{ and } \langle S(u|_\pom +v-u_0), 1\rangle
= 0 \,\, \text{if $n=2$}\}.$$ A reformulation as a variational
inequality reads as follows: Find $(\hat u, \hat v) \in \mathcal{A}$
such that
\begin{equation} \label{VI}
\int_\om DW^{**}(\nabla \hat u) \nabla(u-\hat u) + \langle S (\hat
u|_\pom +\hat v), (u-\hat u)|_\pom + v-\hat v\rangle \geq
\lambda(u-\hat u,v-\hat v)
\end{equation}
for all $(u,v) \in \mathcal{A}$.

Convexity and the closedness of $\mathcal{A}$ assure that the
relaxed functional $J$ assumes its minimum. Due to the lack of
coercivity, the minimizer may fail to be unique, though certain
macroscopic quantities are uniquely determined.
\begin{lem} \label{uniqueness}
The set of minimizers is nonempty and bounded in $X$. The stress
$DW^{**}(\hat u)$, the projected gradient $\mathbb{P} \nabla \hat
u$, the region of microstructure $\{x \in \om : Q(\nabla \hat u) =
0\}$ and the boundary value $\hat u|_\pom+\hat v$ are independent of
the minimizer $(\hat u,\hat v)\in \mathcal{A}$ of $J$ (up to sets of
measure $0$).
\end{lem}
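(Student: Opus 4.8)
The plan is to exploit strict convexity of $J$ in the directions that matter, using the inequalities \eqref{W1}--\eqref{W4} as the workhorses. First I would establish that the set of minimizers is nonempty: since $J$ is convex, continuous and, by the lower bound in \eqref{W1} together with the positivity of $S$, coercive on $\mathcal{A}$ modulo the kernel of $W^{**}$, a standard direct-method argument (weak lower semicontinuity plus the fact that $\mathcal{A}$ is closed and convex, hence weakly closed in the reflexive space $X$) gives existence. Boundedness of the minimizer set in $X$ follows from the same coercivity estimate: if $(\hat u,\hat v)$ is a minimizer, comparing $J(\hat u,\hat v)$ with $J(0,0)$ and using \eqref{W1} to control $\|\nabla\hat u\|_{L^4(\om)}^4$, the positive-definiteness of $S$ to control $\|\hat u|_\pom+\hat v\|_{W^{1/2,2}(\pom)}$, and a Poincar\'e/trace inequality to recover the full $W^{1,4}(\om)$ norm of $\hat u$, yields a bound independent of the particular minimizer.

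For the uniqueness of the macroscopic quantities, let $(\hat u_1,\hat v_1)$ and $(\hat u_2,\hat v_2)$ be two minimizers. Both satisfy the variational inequality \eqref{VI}; testing the inequality for the first minimizer with $(u,v)=(\hat u_2,\hat v_2)$ and the one for the second with $(u,v)=(\hat u_1,\hat v_1)$, and adding, the $\lambda$-terms cancel and one obtains
\begin{equation} \label{monotone}
\int_\om \bigl(DW^{**}(\nabla\hat u_1)-DW^{**}(\nabla\hat u_2)\bigr)\nabla(\hat u_1-\hat u_2)
+ \langle S\bigl((\hat u_1|_\pom+\hat v_1)-(\hat u_2|_\pom+\hat v_2)\bigr),(\hat u_1|_\pom+\hat v_1)-(\hat u_2|_\pom+\hat v_2)\rangle \leq 0.
\end{equation}
Since $S$ is positive and $DW^{**}$ is monotone (as the gradient of a convex function), both terms are nonnegative, so both vanish. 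Positivity of $S$ forces $(\hat u_1|_\pom+\hat v_1)-(\hat u_2|_\pom+\hat v_2)=0$ in $W^{1/2,2}(\pom)$, which is the statement about the boundary value $\hat u|_\pom+\hat v$. For the pointwise-a.e.\ claims, the vanishing of the first integral together with the algebraic inequality \eqref{W4} (with $F=\nabla\hat u_1(x)$, $E=\nabla\hat u_2(x)$) gives, after integrating \eqref{W4}, that $\mathbb{P}\nabla\hat u_1=\mathbb{P}\nabla\hat u_2$ a.e., that $Q(\nabla\hat u_1)=Q(\nabla\hat u_2)$ a.e., and (from the middle term of \eqref{W4}, on the set where $Q(F)+Q(E)>0$) that $A\cdot(\nabla\hat u_1-\nabla\hat u_2)=0$ a.e.\ there. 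Knowing $\mathbb{P}\nabla\hat u_1=\mathbb{P}\nabla\hat u_2$ and $Q(\nabla\hat u_1)=Q(\nabla\hat u_2)$ everywhere, and additionally the $A$-component equality wherever $Q>0$, one checks directly from the explicit formulas $W^{**}(F)=Q(F)^2+4|A|^2|F-B|^2-4(A(F-B))^2$ and $DW^{**}$ that $DW^{**}(\nabla\hat u_1)=DW^{**}(\nabla\hat u_2)$ a.e., and that the microstructure region $\{Q(\nabla\hat u)=0\}$ is the same set.

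The main obstacle is the bookkeeping in the last step: on the set where $Q(\nabla\hat u_1)=Q(\nabla\hat u_2)=0$, the inequality \eqref{W4} only controls $\mathbb{P}(\nabla\hat u_1-\nabla\hat u_2)$, not the $A$-direction component, and this is exactly as it should be — the $A$-component of $\nabla\hat u$ is genuinely not determined inside the microstructure region. One has to verify carefully that $DW^{**}$ nonetheless does not see this undetermined component, i.e.\ that $DW^{**}(F)$ depends on $F$ only through $\mathbb{P}F$, $Q(F)$, and (when $Q(F)>0$) the scalar $A(F-B)$; this is a direct computation with the explicit formula for $W^{**}$, noting that on $\{Q=0\}$ one has $|F-B|^2-|A|^2\le 0$, so $W^{**}(F)=4|A|^2|F-B|^2-4(A(F-B))^2=4|A|^2|\mathbb{P}(F-B)|^2$, which indeed depends only on $\mathbb{P}F$. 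Once this structural observation is in place, the remaining steps are routine.
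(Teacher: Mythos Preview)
Your argument is correct and runs parallel to the paper's. Two differences are worth flagging. First, to obtain the vanishing of the monotone term and the $S$-term, the paper argues that $J$ (being convex) is constant on the segment between two minimizers and then differentiates to get $\langle J'(\hat u_2,\hat v_2)-J'(\hat u_1,\hat v_1),(\hat u_2-\hat u_1,\hat v_2-\hat v_1)\rangle=0$, while you test the two variational inequalities against each other and add; these are interchangeable standard moves. Second, and more substantively, for the uniqueness of the stress the paper simply invokes \eqref{W3}: once the nonnegative integrand $(DW^{**}(\nabla\hat u_1)-DW^{**}(\nabla\hat u_2))\nabla(\hat u_1-\hat u_2)$ vanishes a.e., \eqref{W3} immediately forces $DW^{**}(\nabla\hat u_1)=DW^{**}(\nabla\hat u_2)$ a.e., with no case analysis. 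Your route through the explicit formula---observing that on $\{Q=0\}$ one has $DW^{**}(F)=8|A|^2\,\mathbb{P}(F-B)$, hence $DW^{**}$ does not see the $A$-component there, while on $\{Q>0\}$ the middle term of \eqref{W4} pins down the $A$-component too---is also correct and has the merit of explaining \emph{why} the undetermined part of $\nabla\hat u$ cannot affect the stress, but \eqref{W3} is the one-line shortcut you overlooked. (A tiny quibble on boundedness: $(0,0)$ need not lie in $\mathcal{A}$ when $n=2$; compare instead with any fixed admissible element, as the paper implicitly does by bounding $|J(\hat u,\hat v)|$ from below and using the Friedrichs-type inequality \eqref{friedrichsineq} on $\gt$, where $\hat v$ vanishes.)
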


For the proof, we recall the variant
\begin{equation}\label{friedrichsineq}
\|\hat u\|_{W^{1,4}(\om)}\lesssim \|\nabla\hat u\|_{L^4(\om)} +
\|\hat u \|_{W^{\frac{1}{2},2}(\gt)}
\end{equation}
of Friedrichs' inequality from \cite{scalar}.

\begin{proof}[Proof of Lemma \ref{uniqueness}]
By (\ref{W1}) and the coercivity of $S$, we have
\begin{eqnarray*}
|J(\hat u,\hat v)| &\geq& C_1 \|\nabla \hat u\|_{L^4(\om)}^4 -C_2
\mathrm{vol}\ \om
+ \frac{1}{2} C_S\|\hat u|_{\gs} +v\|_{W^{\frac{1}{2},2}(\gs)}^2\\
& & \quad+\ \frac{1}{2} C_S\| \hat u \|_{W^{\frac{1}{2},2}(\gt)}^2 -
\|f\|_{L^{4/3}(\om)} \|\hat u\|_{W^{1,4}(\om)} \\ & &\quad -\ \|t_0
+S u_0\|_{W^{-\frac{1}{2},2}(\pom)} \|\hat u|_{\gs} +\hat
v\|_{W^{\frac{1}{2},2}(\gs)} \\ & & \quad - \ \|t_0
+S u_0\|_{W^{-\frac{1}{2},2}(\pom)} \|\hat u\|_{W^{\frac{1}{2},2}(\gt)}\\
\end{eqnarray*}
for any minimizer $(\hat u,\hat v) \in \mathcal{A}$ of $J$.
Consequently
$$\|\nabla \hat u\|_{L^4(\om)}^4 + \|\hat u|_{\gs}
+\hat v\|_{W^{\frac{1}{2},2}(\gs)}^2 + \|\hat u
\|_{W^{\frac{1}{2},2}(\gt)}^2 - C \|\hat u\|_{W^{1,4}(\om)}$$ is
bounded for some $C>0$. The inequality (\ref{friedrichsineq}) easily
yields the boundedness of $\|(\hat u,\hat v)\|_X$.

If $(\hat u_1,\hat v_1), (\hat u_2,\hat v_2) \in \mathcal{A}$ are
two minimizers, $J$ is constant on $\{(\hat u_1,\hat v_1) + s(\hat
u_2-\hat u_1, \hat v_2-\hat v_1) : s \in [0,1]\}$: If not, the
restriction of $J$ to this set would have a maximum $> J(\hat
u_1,\hat v_1)=J(\hat u_2,\hat v_2)$ for some $0<s<1$, contradicting
the convexity of $J$. Therefore
$$\langle J'(\hat u_2,\hat v_2)-J'(\hat u_1,\hat v_1), (\hat u_2 - \hat u_1 ,\hat v_2- \hat v_1)\rangle=0,$$
or for our particular $J$
\begin{eqnarray*}
0 & = & \int_\om (DW^{**}(\nabla \hat u_2) - DW^{**}(\nabla \hat u_1)) \nabla(\hat u_2-\hat u_1) \\
&& \qquad +\ \langle S ((\hat u_2-\hat u_1)|_\pom + \hat v_2-\hat
v_1), (\hat u_2-\hat u_1)|_\pom + \hat v_2-\hat v_1\rangle.
\end{eqnarray*}
Both of the terms on the right hand side are non--negative, because
$S$ is coercive and $W^{**}$ convex, and hence $$\hat u_1|_\pom +
\hat v_1= \hat u_2|_\pom + \hat v_2 \ \text{ and }\ (DW^{**}(\nabla
\hat u_2) - DW^{**}(\nabla \hat u_1)) \nabla(\hat u_2-\hat u_1) =
0$$ almost everywhere. The inequality (\ref{W3}),
$$
|DW^{**}(\nabla \hat u_2) - DW^{**}(\nabla \hat u_1)|^2 \lesssim
(1+|\nabla \hat u_2|^2+|\nabla \hat u_1|^2)(DW^{**}(\nabla \hat u_2)
- DW^{**}(\nabla \hat u_1)) \nabla (\hat u_2-\hat u_1),$$ implies
$DW^{**}(\nabla \hat u_1) = DW^{**}(\nabla \hat u_2)$ almost
everywhere. The assertions about the projected gradients and the
region of microstructure are immediate consequences of inequality
(\ref{W4}),
\begin{align*}
|\mathbb{P}\nabla \hat u_2-\mathbb{P}\nabla \hat u_1|^2 + (Q(\nabla
\hat u_2)-Q(\nabla \hat u_1))^2  \lesssim (DW^{**}(\nabla \hat u_2)
- DW^{**}(\nabla \hat u_1)) \nabla (\hat u_2-\hat u_1).
\end{align*}
\end{proof}

In particular, the displacement $\hat u_2$ on $\omc$ is uniquely
determined and may be computed from $\hat u|_\pom+\hat v$ with the
help of layer potentials. Due to the lack of convexity of $W$,
neither $\hat u$ nor $\nabla \hat u$ needs to be unique. However,
Lemma \ref{uniqueness} allows to identify subsets of $\om$, on which
these quantities are well--defined.

\begin{cor}
a) Let $\om_{t,A}$ be the set of points $x \in \om$ for which the
component of a hyperplane perpendicular to $A$ through $x$
intersects $\gt$. Then
the displacement $u|_{\om_{t,A}}$ is independent of the minimizer.\\
b) The same holds for the gradient $\nabla \hat u$ outside the
region of microstructure.
\end{cor}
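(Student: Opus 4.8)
\noindent The plan is to fix two arbitrary minimizers $(\hat u_1,\hat v_1)$ and $(\hat u_2,\hat v_2)$ of $J$, to put $w:=\hat u_2-\hat u_1\in\W$, and to exploit the three facts supplied by Lemma \ref{uniqueness}: $\hat u_1|_\pom+\hat v_1=\hat u_2|_\pom+\hat v_2$, $\mathbb P\nabla\hat u_1=\mathbb P\nabla\hat u_2$ and $DW^{**}(\nabla\hat u_1)=DW^{**}(\nabla\hat u_2)$ a.e.; since $DW^{**}(\nabla\hat u_1)=DW^{**}(\nabla\hat u_2)$ makes $(DW^{**}(\nabla\hat u_2)-DW^{**}(\nabla\hat u_1))\nabla(\hat u_2-\hat u_1)=0$ a.e., inequality (\ref{W4}) additionally yields $Q(\nabla\hat u_1)=Q(\nabla\hat u_2)$ a.e. Part b) then reduces to showing that these identities determine $\nabla\hat u$ pointwise a.e.\ on the set $\{Q(\nabla\hat u)>0\}$, which by Lemma \ref{uniqueness} is minimizer--independent and is precisely the complement of the region of microstructure; part a) reduces to showing $w=0$ a.e.\ on $\om_{t,A}$.

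\noindent For b) the argument is pointwise and purely algebraic. On $\{Q>0\}$ one has $Q(F)=|F-B|^2-|A|^2$, so differentiating the given formula for $W^{**}$ gives $DW^{**}(F)=4(|F-B|^2+|A|^2)(F-B)-8\,(A\cdot(F-B))\,A$. Setting $F_i=\nabla\hat u_i(x)$ and decomposing $F_i=\mathbb P F_i+t_i\,A/|A|$, $B=\mathbb P B+s\,A/|A|$, the hypotheses $\mathbb P F_1=\mathbb P F_2$ and $Q(F_1)=Q(F_2)$ give $|F_1-B|^2=|F_2-B|^2$, hence $(t_1-s)^2=(t_2-s)^2$. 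Projecting $DW^{**}(F_1)=DW^{**}(F_2)$ onto $A$ and simplifying with $A\cdot(F_i-B)=|A|(t_i-s)$ leaves $Q(F_1)(t_1-s)=Q(F_2)(t_2-s)$; since the common value $Q(F_i)$ is strictly positive, $t_1=t_2$, hence $F_1=F_2$. Thus $\nabla\hat u$ is minimizer--independent a.e.\ on $\{Q(\nabla\hat u)>0\}$, which is b).

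\noindent For a) the starting point is $\mathbb P\nabla w=0$ a.e., i.e.\ $\nabla w$ is a.e.\ parallel to $A$. In Cartesian coordinates with $A$ along $e_1$ this reads $\partial_2 w=\dots=\partial_n w=0$, so by the Fubini theorem for restrictions of Sobolev functions to hyperplanes, for a.e.\ $c$ the restriction $w(c,\cdot)$ lies in $W^{1,4}$ of the open set $\om\cap\{x_1=c\}$ with vanishing gradient, hence is constant on every connected component of that slice. On the other hand $w|_{\gt}=0$, because $(\hat u_2-\hat u_1)|_\pom=\hat v_1-\hat v_2$ and each $\hat v_i\in\Wgs$ is supported in $\bar\gs$, which misses the open set $\gt$. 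If $x\in\om_{t,A}$, the slice component $\Sigma$ through $x$ reaches $\gt$, so the constant value of $w$ on $\Sigma$ must coincide with the boundary value $0$; therefore $w\equiv0$ on $\Sigma$, and since $\om_{t,A}$ is, up to a null set, the union of such components $\Sigma$, we obtain $w=0$ a.e.\ on $\om_{t,A}$.

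\noindent I expect the last matching step in a) --- identifying the slice constant of $w$ with its trace $0$ on $\gt$ --- to require the most care: for a merely Lipschitz $\om$, and in dimensions $n\ge4$ where $\W$ does not embed into $C(\bar\om)$, one cannot argue by continuity up to $\pom$. This is handled in boundary charts, where the trace of $w$ on $\gt$ is recovered as the $\mathcal H^{n-1}$--a.e.\ pointwise limit of the interior values of $w$ along lines transversal to $\pom$, which in turn are governed by the constants on the neighbouring slice components; equivalently, one restricts attention to the full--measure part of $\om_{t,A}$ consisting of slice components whose closure meets $\gt$ in a set of positive $\mathcal H^{n-1}$--measure. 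For $n\le 3$ the Sobolev embedding $\W\hookrightarrow C(\bar\om)$ renders the step immediate.
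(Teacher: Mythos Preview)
Your proof is correct and follows the same route as the paper: for a) both argue that $\mathbb{P}\nabla w=0$ makes $w$ constant along hyperplanes orthogonal to $A$ and then use $w|_{\gt}=0$ to kill those constants on $\om_{t,A}$, with your version adding welcome care about matching the slice constants to the trace. For b) the paper simply cites (\ref{W4}), whose term $\frac{Q(\nabla\hat u_1)+Q(\nabla\hat u_2)}{|A|}\,|A\nabla w|^2$ forces $A\cdot\nabla w=0$ on $\{Q>0\}$, whereas you expand $DW^{**}$ explicitly and project onto $A$ to reach the same conclusion by hand --- a purely stylistic difference.
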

\begin{proof}
The proof closely follows the arguments of \cite{cp1}, Theorem
3.\\
a) Let $(\hat u_1,\hat v_1)$, $(\hat u_2, \hat v_2) \in \mathcal{A}$
be two minimizers, and consider $w=\hat u_2-\hat u_1$. Because
$\mathbb{P}\nabla \hat u_1 = \mathbb{P}\nabla \hat u_2$, $\nabla w$
is parallel to $A$ almost everywhere. It is easy to see that,
therefore, $w$ may be modified on a set of measure zero to yield an
absolutely continuous function which is locally constant along the
hyperplanes perpendicular to $A$. With $w|_{\gt}$ being $0$ by Lemma
\ref{uniqueness}, $w$ also has to
vanish on almost every hyperplane hitting $\gt$.\\
b) is a consequence of $\mathbb{P}\nabla \hat u_1 = \mathbb{P}\nabla
\hat u_2$, $DW^{**}(\nabla \hat u_1) = DW^{**}(\nabla \hat u_2)$ and
(\ref{W4}): $$(Q(\nabla \hat u_2)+Q(\nabla \hat u_1)) |A \nabla
(\hat u_2-\hat u_1)|^2 \lesssim (DW^{**}(\nabla \hat u_2) -
DW^{**}(\nabla \hat u_1)) \nabla (\hat u_2-\hat u_1).$$
\end{proof}

\vspace*{0.2cm}
\section{Discretization and A Priori Estimates}\label{sec::apriori}

We are now going to analyze which quantities can be computed
numerically with a Galerkin method.

Let $\{\mathcal{T}_h\}_{h\in I}$ a regular triangulation of
$\om\subset \R^2$ into disjoint open regular triangles $K$, so that
$\overline{\om} = \bigcup_{K \in \mathcal{T}_h} K$. Each element has
at most one edge on $\pom$, and the closures of any two of them
share at most a single vertex or edge. Let $h_K$ denote the diameter
of $K \in \mathcal{T}_h$ and $\rho_K$ the diameter of the largest
inscribed ball. We assume that $1 \leq \max_{K \in \mathcal{T}_h}
\frac{h_K}{\rho_K} \leq R$ independent of $h$ and that $h =
\max_{K\in \mathcal{T}_h} h_K$. $\mathcal{E}_h$ is going to be the
set of all edges of the triangles in $\mathcal{T}_h$, $D$ the set of
nodes. Associated to $\mathcal{T}_h$ is the space $\Wh \subset \W$
of functions whose restrictions to any $K \in \mathcal{T}_h$ are
linear.

$\pom$ is triangulated by $\{l \in \mathcal{E}_h : l \subset
\pom\}$. $\Whg$ denotes the corresponding space of piecewise linear
functions, and $\Whgs$ the subspace of those supported on $\gs$.
Finally, $\Whmg \subset \Wmg$,
$$\mathcal{A}_h = \A \cap (\Wh \times \Whg)\ $$
and $X^4_h = \Wh \times \Whgs$.

We denote by $i_h: \Wh \hookrightarrow \W$, $j_h :
\Whgs\hookrightarrow \Wgs$ and $k_h: \Whmg \hookrightarrow \Wmg$ the
canonical inclusion maps. A discretization of the
Steklov--Poincar\'e operator is defined as
\[
S_h = \frac12( W+(I-K')k_h(k_h^* V k_h)^{-1} k_h^*(I-K))
\]
from the single resp.~double layer potentials $V$ and $K$ and the
hypersingular integral operator $W$ of the exterior problem. $S_h$
is well--known to be uniformly coercive for small $h$ in the sense
that there exists $h_0>0$ and an $h$--independent $\alpha_S>0$ such
that for all $0<h<h_0$
$$\langle S_h u_h, u_h\rangle \geq \alpha_S \|u_h\|_{\Wg}^2.$$
Furthermore, in this case
\begin{equation}\label{sdisc}
\|(S_h-S)u\|_\Wmg \leq C_S\ \mathrm{dist}_{\Wmg}(V^{-1}(1-K)u,\Whmg)
\end{equation}
for all $u \in \Wg$ and all $0<h<h_0$.

As before, $(\hat u, \hat v)$ denotes a minimizer of $J$ over
$\mathcal{A}$, while $(\hat u_h,\hat v_h)$ minimizes the approximate
functional
$$J_h(u_h, v_h) = \int_\om W^{**}(\nabla u_h) + \frac{1}{2} \langle
S_h(u_h|_\pom +v_h),u_h|_\pom +v_h\rangle - \lambda_h(u_h,v_h),$$
$$\lambda_h(u_h,v_h) = \langle t_0 +S_h u_0, u_h|_\pom + v_h\rangle + \int_\om f
u_h,$$ over $\mathcal{A}_h$. For simplicity, abbreviate the stress
$DW^{**}(\nabla \hat u)$ by $\sigma$ and the indicator $Q(\nabla
\hat u)$ for microstructure by $\xi$. Similarly, write $\sigma_h$
and $\xi_h$ for the corresponding quantities associated to $\hat
u_h$. The following a priori estimate holds.

\begin{thm} \label{apriori}
The Galerkin approximations of the stress $\sigma$, exterior
boundary values $u|_\pom + v$ and the other quantities in Lemma
\ref{uniqueness}
converge for $h \to 0$. \\
a) There is an $h$--independent $C>0$ such that
\begin{eqnarray*}
& & \hspace*{-0.9cm}\|\sigma - \sigma_h\|_{L^{\frac{4}{3}}(\om)}^2 + \|(\hat u-\hat u_h)|_{\pom}+\hat v-\hat v_h\|_{W^{\frac{1}{2},2}(\pom)}^2 \\
& & + \|\mathbb{P}\nabla \hat u-\mathbb{P}\nabla \hat
u_h\|_{L^2(\om)}^2 + \|(\xi+\xi_h)^{1/2} A \nabla(\hat u-\hat
u_h)\|_{L^2(\om)}^2 +\|\xi-\xi_h\|_{L^2(\om)}^2\\ & \leq & C
\inf_{(U_h,V_h) \in \mathcal{A}_h} \big\{ \|u-U_h\|_{W^{1,4}(\om)} +
\|(u-U_h)|_{\pom}+v-V_h)\|_{W^{\frac{1}{2},2}(\pom )}\big\}\\
& & \hspace*{3.0cm}   +\ \mathrm{dist}_{\Wmg}(V^{-1}(1-K)(\hat u +
\hat v - u_0),\Whmg)^2.
\end{eqnarray*}
b) For pure transmission conditions, $\gt = \pom$, the slightly
better estimate
\begin{eqnarray*}
& & \hspace*{-0.9cm}\|\sigma - \sigma_h\|_{L^{\frac{4}{3}}(\om)}^2 +
\|\hat u-\hat u_h\|_{W^{\frac{1}{2},2}(\pom)}^2  +
\|\mathbb{P}\nabla \hat
u-\mathbb{P}\nabla \hat u_h\|_{L^2(\om)}^2 \\
& &+ \|(\xi+\xi_h)^{1/2} A \nabla(\hat u-\hat
u_h)\|_{L^2(\om)}^2 +\|\xi-\xi_h\|_{L^2(\om)}^2\\
&\leq & C \inf_{U_h \in \mathcal{A}_h} \big\{ \|\nabla \hat u -
\nabla U_h\|^2_{L^{4}(\om)} +
\|\hat u-U_h\|^2_{W^{\frac{1}{2},2}(\pom)}\big\} \\
& & \hspace*{0.9cm}  + \ \mathrm{dist}_{\Wmg}(V^{-1}(1-K)(\hat u -
u_0),\Whmg)^2
\end{eqnarray*}
holds.
\end{thm}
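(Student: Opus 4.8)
Set $\hat g=\hat u|_\pom+\hat v$, $\hat g_h=\hat u_h|_\pom+\hat v_h$ and $g=\hat g-\hat g_h=(\hat u-\hat u_h)|_\pom+\hat v-\hat v_h$. The plan is first to reduce all five quantities on the left to the single ``energy defect''
$$\mathcal{E}:=\int_\om\bigl(DW^{**}(\nabla\hat u)-DW^{**}(\nabla\hat u_h)\bigr)\nabla(\hat u-\hat u_h)+\langle Sg,\,g\rangle,$$
and then to bound $\mathcal{E}$ by the right-hand side via a C\'ea/Falk-type argument adapted to the degenerate nonlinearity and to the variational crime $S_h\neq S$. As a preliminary, one checks exactly as in Lemma \ref{uniqueness} (with the uniform coercivity of $S_h$ replacing that of $S$) that $J_h$ attains its minimum on the finite-dimensional closed convex set $\mathcal{A}_h$ and that the minimizers $(\hat u_h,\hat v_h)$ are bounded in $X$ uniformly for $h<h_0$. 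Both summands of $\mathcal{E}$ are non-negative (monotonicity of $DW^{**}$, coercivity of $S$); (\ref{W4}) integrates to $\|\mathbb{P}\nabla(\hat u-\hat u_h)\|_{L^2(\om)}^2+\|(\xi+\xi_h)^{1/2}A\nabla(\hat u-\hat u_h)\|_{L^2(\om)}^2+\|\xi-\xi_h\|_{L^2(\om)}^2\lesssim\mathcal{E}$; coercivity of $S$ gives $\|g\|_\Wg^2\lesssim\mathcal{E}$; and (\ref{W3}) together with H\"older's inequality (exponents $\frac{3}{2}$ and $3$) yields $\|\sigma-\sigma_h\|_{L^{4/3}(\om)}^2\lesssim\bigl(1+\|\nabla\hat u\|_{L^4(\om)}^4+\|\nabla\hat u_h\|_{L^4(\om)}^4\bigr)^{1/2}\mathcal{E}\lesssim\mathcal{E}$, the last step using the uniform bounds above. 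Hence the left-hand side of the theorem is $\lesssim\mathcal{E}$.

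For the main estimate, abbreviate the form of (\ref{VI}) --- linear in its last two slots --- by $\mathcal{B}(u,v;w,z)=\int_\om DW^{**}(\nabla u)\nabla w+\langle S(u|_\pom+v),w|_\pom+z\rangle$ and let $\mathcal{B}_h$ be its analogue with $S_h$; the discrete minimizer satisfies $\mathcal{B}_h(\hat u_h,\hat v_h;u_h-\hat u_h,v_h-\hat v_h)\ge\lambda_h(u_h-\hat u_h,v_h-\hat v_h)$ for all $(u_h,v_h)\in\mathcal{A}_h$. Since $\mathcal{A}_h\subset\mathcal{A}$, I would test (\ref{VI}) with the discrete solution $(\hat u_h,\hat v_h)$, giving $\mathcal{B}(\hat u,\hat v;\hat u-\hat u_h,\hat v-\hat v_h)\le\lambda(\hat u-\hat u_h,\hat v-\hat v_h)$; test the discrete inequality with an arbitrary $(U_h,V_h)\in\mathcal{A}_h$; subtract; add and subtract $DW^{**}(\nabla\hat u)$ and $S(\hat u|_\pom+\hat v)$ in the appropriate slots; and convert $S_h,\lambda_h$ into $S,\lambda$ (which differ only through $\langle(S-S_h)u_0,\cdot\rangle$) at the price of pairings $\langle(S-S_h)\cdot,\cdot\rangle$. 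With $G_h=U_h|_\pom+V_h$ and $R_1:=\lambda(\hat u-U_h,\hat v-V_h)-\mathcal{B}(\hat u,\hat v;\hat u-U_h,\hat v-V_h)$, this bookkeeping leads to
$$\mathcal{E}\ \le\ R_1+\int_\om(\sigma-\sigma_h)\,\nabla(\hat u-U_h)+\langle Sg,\,\hat g-G_h\rangle+\bigl\langle(S-S_h)(u_0-\hat g_h),\,G_h-\hat g_h\bigr\rangle.$$
Testing (\ref{VI}) with $(U_h,V_h)$ shows $R_1\ge0$; crude bounds on $\lambda$, $\mathcal{B}$ (via (\ref{W2}) and the uniform bound on $\hat u$) give $R_1\lesssim\|\hat u-U_h\|_\W+\|(\hat u-U_h)|_\pom+\hat v-V_h\|_\Wg$, the first-power best-approximation term of part a). If $\gt=\pom$ then, with $n=2$, the Signiorini part disappears, $v\equiv0$, $\mathcal{A}$ is an \emph{affine} subspace and (\ref{VI}) holds with equality on its linear test space, to which $\hat u-U_h$ belongs; hence $R_1=0$, only the quadratic remainder survives, and that is why b) is sharper.

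Finally one absorbs the three remaining terms. Young's inequality gives $\int_\om(\sigma-\sigma_h)\nabla(\hat u-U_h)\le\varepsilon\|\sigma-\sigma_h\|_{L^{4/3}(\om)}^2+C_\varepsilon\|\nabla(\hat u-U_h)\|_{L^4(\om)}^2$ and $\langle Sg,\hat g-G_h\rangle\lesssim\|g\|_\Wg\|\hat g-G_h\|_\Wg\le\varepsilon\|g\|_\Wg^2+C_\varepsilon\|\hat g-G_h\|_\Wg^2$; the $\varepsilon$-parts are moved into $\mathcal{E}$ (hence to the left, for $\varepsilon$ small), the remainders are best-approximation terms, with $\|\hat g-G_h\|_\Wg=\|(\hat u-U_h)|_\pom\|_\Wg$ when $\gt=\pom$. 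For the nonconforming term I would split $u_0-\hat g_h=-(\hat g-u_0)+g$ and $G_h-\hat g_h=-(\hat g-G_h)+g$ and expand the symmetric pairing: the parts in which $S-S_h$ hits the \emph{fixed} datum $\hat g-u_0$ are bounded by (\ref{sdisc}) and Young by $\varepsilon\|g\|_\Wg^2+C_\varepsilon\|\hat g-G_h\|_\Wg^2+C\,\mathrm{dist}_\Wmg\bigl(V^{-1}(1-K)(\hat u+\hat v-u_0),\Whmg\bigr)^2$, the last of which is the term appearing in the theorem. The genuinely delicate part --- and the main obstacle --- is the part in which $S-S_h$ hits the error $g$ itself, i.e.\ $\langle(S-S_h)g,g\rangle$ and $\langle(S-S_h)g,\hat g-G_h\rangle$: here (\ref{sdisc}) does not deliver smallness since $g$ varies with $h$, so one invokes the standard operator-norm convergence $\|S_h-S\|_{\Wg\to\Wmg}\to0$ (equivalently, uniform approximability of $V^{-1}(1-K)(\cdot)$ by $\Whmg$; or one uses $0\le\langle(S-S_h)g,g\rangle=\tfrac{1}{2}\langle V(w-w_h),w-w_h\rangle$, $w=V^{-1}(1-K)g$ and $w_h$ its $\Whmg$-Galerkin approximation, with the same approximation property) to control these two terms by $\varepsilon\|g\|_\Wg^2+C_\varepsilon\|\hat g-G_h\|_\Wg^2$ for $h<h_0$. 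Gathering everything, taking $\varepsilon$ small enough to move all $\varepsilon$-multiples of left-hand-side quantities to the left, noting that for $h<h_0$ the quadratic best-approximation terms are dominated by the linear one, and passing to the infimum over $(U_h,V_h)\in\mathcal{A}_h$ yields a) and b); the convergence as $h\to0$ follows since the best-approximation error and the distance term tend to $0$ by density.
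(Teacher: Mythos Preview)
Your proof is correct and follows essentially the same route as the paper: both reduce the left-hand side to the energy defect $\mathcal{E}$ via (\ref{W3}), (\ref{W4}) and the coercivity of $S$, then test the continuous inequality (\ref{VI}) with $(\hat u_h,\hat v_h)\in\mathcal{A}_h\subset\mathcal{A}$ and the discrete one with an arbitrary $(U_h,V_h)\in\mathcal{A}_h$, add and subtract $\sigma$ and $S\hat g$, and absorb via Young's inequality; in both arguments the residual you call $R_1$ is bounded by first powers of the approximation error in case a) and vanishes for $\gt=\pom$ in case b). The only cosmetic difference is in the bookkeeping of the nonconforming term $\langle(S-S_h)(\hat g_h-u_0),\hat g_h-G_h\rangle$: the paper splits the \emph{first} slot as $(\hat g-u_0)-g$ and then bounds the two resulting lines rather tersely, whereas you split both slots and are explicit that the piece $\langle(S-S_h)g,g\rangle$ (which is $\ge 0$ and sits on the wrong side) cannot be absorbed without some smallness of $S-S_h$, so that operator-norm convergence $\|S-S_h\|_{\Wg\to\Wmg}\to 0$ (equivalently, a uniform approximation property for $V^{-1}(1-K)$) is what actually closes the argument --- the paper is using the same fact implicitly when it claims an $\varepsilon$-prefactor on $\|g\|^2$ for that line.
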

\begin{proof}
We integrate (\ref{W3}) and use H\"older's inequality as well as the
uniform bound on the norm of minimizers (the first assertion in
Lemma \ref{uniqueness}) to obtain
\begin{equation} \label{basicsigmaest}
\|\sigma - \sigma_h\|_{L^{\frac{4}{3}}(\om)}^2 \lesssim \int_\om
(\sigma - \sigma_h) \nabla (\hat u - \hat u_h).
\end{equation}
Most of the remaining terms on the left hand side are similarly
bounded with the help of (\ref{W4}):
\begin{align}
\|\mathbb{P}\nabla \hat u-\mathbb{P}\nabla \hat u_h\|_{L^2(\om)}^2 +
\|(\xi+\xi_h)^{1/2} A \nabla(\hat u-\hat u_h)\|_{L^2(\om)}^2 +\|\xi-\xi_h\|_{L^2(\om)}^2 \nonumber\\
 \lesssim \int_\om (\sigma - \sigma_h) \nabla (\hat
u - \hat u_h) \label{basicrestest}.
\end{align}
Adding the inequalities,
\begin{eqnarray*}
LHS^2&:= & \|\sigma - \sigma_h\|_{L^{\frac{4}{3}}(\om)}^2 + \|(\hat u-\hat u_h)|_{\pom}+\hat v-\hat v_h\|_{W^{\frac{1}{2},2}(\pom)}^2 \\
& & + \|\mathbb{P}\nabla \hat u-\mathbb{P}\nabla \hat
u_h\|_{L^2(\om)}^2 +
\|(\xi+\xi_h)^{1/2} A \nabla(\hat u-\hat u_h)\|_{L^2(\om)}^2 +\|\xi-\xi_h\|_{L^2(\om)}^2 \\
 &\lesssim& \textstyle{\int_\om} (\sigma - \sigma_h) (\nabla
\hat u -
\nabla \hat u_h) + \langle S (\hat u-\hat u_h)|_\pom + \hat v-\hat v_h, (\hat u - \hat u_h)|_\pom +\hat v-\hat v_h\rangle\\
& =& -\int_\om \sigma \nabla \hat u_h - \langle S(\hat u|_\pom + \hat v), \hat u_h|_\pom+\hat v_h\rangle \\
& & \qquad - \int_\om \sigma_h \nabla \hat u - \langle S(\hat u_h|_\pom + \hat v_h), \hat u|_\pom+\hat v\rangle\\
& & \qquad +\int_\om \sigma \nabla \hat u + \langle S(\hat u|_\pom + \hat v), \hat u|_\pom+\hat v\rangle \\
& & \qquad + \int_\om \sigma_h \nabla \hat u_h + \langle S_h(\hat
u_h|_\pom + \hat v_h), \hat u_h|_\pom+\hat v_h\rangle\\& & \qquad
+\langle (S-S_h)(\hat u_h|_\pom + \hat v_h), \hat u_h|_\pom+\hat
v_h\rangle.
\end{eqnarray*}
Let $(U,V) \in \mathcal{A}$, $(U_h,V_h) \in \mathcal{A}_h$. Applying
the variational inequality (\ref{VI}) to the third and fourth line
and rearranging terms leads to
\begin{eqnarray*}
LHS^2 &\lesssim & \int_\om \sigma \nabla (U-\hat u_h) + \langle S(\hat u|_\pom + \hat v), U|_\pom+V-\hat u_h|_\pom-\hat v_h\rangle \\
& & \quad + \int_\om \sigma_h \nabla (U_h-\hat u) + \langle S(\hat u_h|_\pom + \hat v_h), U_h|_\pom+V_h-\hat u|_\pom-\hat v\rangle\\
& & \quad + \lambda(\hat u-U, \hat v-V) + \lambda(\hat u_h-U_h, \hat v_h-V_h)\\
& & \quad + \langle (S-S_h) ( \hat u_h|_\pom + \hat v_h - u_0),
(\hat u_h-U_h)|_\pom + \hat v_h-V_h \rangle\\
& = & \int_\om \sigma \nabla (U-\hat u_h) + \langle S(\hat u|_\pom + \hat v), (U-\hat u_h)|_\pom+V-\hat v_h\rangle\\
& & \quad + \int_\om \sigma \nabla (U_h-\hat u) + \langle S(\hat u|_\pom + \hat v), (U_h-\hat u)|_\pom+V_h-\hat v\rangle\\
& & \quad + \int_\om (\sigma_h-\sigma) \nabla (U_h-\hat u) + \langle S((\hat u_h-\hat u)|_\pom + \hat v_h-\hat v), (U_h- \hat u)|_\pom+V_h-\hat v\rangle\\
& & \quad + \lambda(\hat u-U, \hat v-V) + \lambda(\hat u_h-U_h, \hat
v_h-V_h)\\
&&\quad + \langle (S-S_h) (\hat u|_\pom + \hat v - u_0), (\hat
u_h-U_h)|_\pom + \hat v_h-V_h
\rangle\\
&&\quad + \langle (S-S_h) ( (\hat u_h-\hat u)|_\pom + \hat v_h -\hat
v), (\hat u_h-U_h)|_\pom + \hat v_h-V_h \rangle.
\end{eqnarray*}
H\"older's inequality tells us that
$$\int_\om (\sigma_h-\sigma) \nabla (U_h-\hat u) \leq \|\sigma_h-\sigma\|_{L^{\frac{4}{3}}(\om)}\|\nabla (U_h-\hat u)\|_{L^{4}(\om)},$$
and the continuity of $S$ allows to bound
$$\langle S((\hat u_h-\hat
u)|_\pom + \hat v_h-\hat v), (U_h-\hat u)|_\pom+V_h-\hat v\rangle$$
by a multiple of
\begin{eqnarray*}
& & \hspace*{-1.0cm} \|(\hat u_h-\hat u)|_\pom + \hat v_h-\hat
v\|_{W^{\frac{1}{2},2}(\pom)}\|(U_h-\hat
u)|_\pom+V_h-\hat v\|_{W^{\frac{1}{2},2}(\pom)}\\
&\lesssim& \varepsilon \|(\hat u_h-\hat u)|_\pom + \hat v_h-\hat
v\|_{W^{\frac{1}{2},2}(\pom)}^2 + \frac{1}{\varepsilon} \|(U_h-\hat
u)|_\pom+V_h-\hat v\|_{W^{\frac{1}{2},2}(\pom)}^2
\end{eqnarray*}
for small $\varepsilon>0$. Similarly, the last two lines are, up to
prefactors, bounded by
\begin{eqnarray*}
& & \varepsilon \|(\hat u_h-\hat u)|_\pom + \hat v_h-\hat
v\|_{W^{\frac{1}{2},2}(\pom)}^2 +
(1+\frac{1}{\varepsilon})\|(U_h-\hat u)|_\pom+V_h-\hat
v\|^2_{W^{\frac{1}{2},2}(\pom)}\\
&& + \|(S-S_h) (\hat u|_\pom + \hat v -
u_0)\|^2_{W^{\frac{1}{2},2}(\pom)}.
\end{eqnarray*}
Thus, choosing $(U,V)=(\hat u_h,\hat v_h)$,
\begin{eqnarray*}
LHS^2 &\lesssim &  \|\sigma_h-\sigma\|_{L^{\frac{4}{3}}(\om)}\|\nabla (U_h-\hat u)\|_{L^{4}(\om)} \\
& & \, + \|(U_h-\hat u)|_\pom+V_h-\hat
v\|^2_{W^{\frac{1}{2},2}(\pom)} + \|(S-S_h) (\hat u|_\pom + \hat v -
u_0)\|^2_{W^{\frac{1}{2},2}(\pom)}\\
& & \, + \int_\om \sigma \nabla (U_h-\hat u) + \langle S(\hat
u|_\pom + \hat v), (U_h-\hat u)|_\pom+V_h-\hat v\rangle-
\lambda(U_h-\hat u, V_h-\hat v).
\end{eqnarray*}
If $\gt = \pom$, the variational inequality (\ref{VI}) becomes an
equality, the last line vanishes and b) follows. In the general
case, we estimate the last line by
\begin{eqnarray*}
\|\sigma\|_{L^{\frac{4}{3}}(\om)} \|\nabla (U_h-\hat
u)\|_{L^{4}(\om)}+ \|f\|_{L^{\frac{4}{3}}(\om)}\|U_h- \hat
u\|_{L^{4}(\om)} & &\\ + \|S(\hat u|_\pom+\hat
v-u_0)-t_0\|_{W^{-\frac{1}{2},2}(\pom)} \|(U_h-\hat
u)|_\pom+V_h-\hat v\|_{W^{\frac{1}{2},2}(\pom)},& &
\end{eqnarray*}
recalling that
$$\lambda(U_h-\hat u, V_h-\hat v) = \langle t_0 +S u_0, (U_h-\hat u)|_\pom + V_h-\hat v\rangle + \int_\om f (U_h-\hat u).$$
\end{proof}

In particular, we can stably compute the approximate solutions in
the exterior domain from $\hat u_h|_\pom + \hat v_h$.

\vspace*{0.2cm}
\section{Adaptive Grid Refinement}\label{sec::adaptive}

In order to set up an adaptive algorithm, we now establish an a
posteriori estimate of residual type. It allows to localize the
approximation error and leads to an adaptive mesh refinement
strategy. A related and somewhat more involved estimate for the
linear Laplace operator with unilateral Signiorini contact has been
considered in \cite{hn}.

Let $(\hat u, \hat v) \in \A$, $(\hat u_, \hat v_h) \in \A_h$
solutions of the continuous resp.~discretized variational
inequality. We define a simple approximation $(\pi_h \hat u, \pi_h
\hat v) \in \A_h$ of $(\hat u, \hat v)$ as follows: $\pi_h \hat u$
is going to be the Clement interpolant of $\hat u$, and $\pi_h \hat
v= \hat v_h$.

The next Lemma collects the crucial properties of Clement
interpolation (see e.g.~\cite{bs}).

\begin{lem}\label{interpol}
Let $K \in \mathcal{T}_h$ and $E \in \mathcal{E}_h$. Then with
$\omega_K = \bigcup_{\overline{K'}\cap\overline{K} \neq
\emptyset}K'$ and $\omega_E = \bigcup_{\overline{E'}\cap E \neq
\emptyset}E'$ we have:
\begin{eqnarray*}
\|\hat u - \pi_h \hat u\|_{L^4(K)} &\lesssim& h_K \|\hat u\|_{W^{1,4}(\omega_K)}\ ,\\
\|\hat u - \pi_h \hat u\|_{L^2(E)} &\lesssim& h_E^{1/2} \|\hat
u\|_{W^{\frac{1}{2},2}(\omega_E)}.\\
\end{eqnarray*}
\end{lem}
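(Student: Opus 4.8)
The plan is to reduce everything to the two standard local properties of the Clement operator: $L^p$-stability on patches and a first-order $L^p$-approximation estimate, both of which follow from an averaged Bramble--Hilbert argument on a reference element combined with the affine equivalence provided by shape regularity ($1 \leq h_K/\rho_K \leq R$). I would first fix the construction: for each node $z \in D$ let $\pi_z \hat u$ be the $L^2(\omega_z)$-projection of $\hat u$ onto the (affine) polynomials on the patch $\omega_z = \bigcup_{z \in \overline{K}} K$, and set $\pi_h \hat u = \sum_z (\pi_z \hat u)(z)\, \varphi_z$ with $\{\varphi_z\}$ the nodal hat basis. The key observation is that on a single element $K$ one has $\pi_h \hat u|_K = \sum_{z \in \overline{K}} (\pi_z \hat u)(z)\varphi_z$, so only the finitely many patches $\omega_z$ with $z \in \overline{K}$ — all contained in $\omega_K$ — enter, and $\#\{z : z\in\overline K\}$ is bounded by shape regularity.

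For the first estimate I would write, on $K$, $\hat u - \pi_h \hat u = (\hat u - p) - (\pi_h\hat u - p)$ for an arbitrary affine $p$; since $\pi_h$ reproduces affine functions globally, $\pi_h \hat u - p = \pi_h(\hat u - p)$, and then
\begin{equation*}
\|\hat u - \pi_h \hat u\|_{L^4(K)} \leq \|\hat u - p\|_{L^4(K)} + \sum_{z \in \overline{K}} |(\pi_z(\hat u - p))(z)|\,\|\varphi_z\|_{L^4(K)}.
\end{equation*}
Each coefficient is controlled by $\|\pi_z(\hat u - p)\|_{L^\infty(\omega_z)} \lesssim |K|^{-1/4}\|\pi_z(\hat u - p)\|_{L^4(\omega_z)} \lesssim |K|^{-1/4}\|\hat u - p\|_{L^4(\omega_z)}$, using an inverse estimate on polynomials (norm equivalence on finite-dimensional spaces, transported to $K$ by scaling) and $L^4$-stability of the $L^2$-projection $\pi_z$ on the patch; and $\|\varphi_z\|_{L^4(K)} \lesssim |K|^{1/4}$. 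Choosing $p$ to be the averaged Taylor polynomial of $\hat u$ on $\omega_K$ and invoking the Bramble--Hilbert lemma on the patch gives $\|\hat u - p\|_{L^4(\omega_K)} \lesssim h_K \|\nabla \hat u\|_{L^4(\omega_K)} \leq h_K\|\hat u\|_{W^{1,4}(\omega_K)}$, which yields the claim after collecting the (boundedly many) terms.

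The second estimate follows the same template but with a trace step: after the same splitting $\hat u - \pi_h \hat u = (\hat u - p) - \pi_h(\hat u - p)$ on the element $K_E$ adjacent to $E$, I bound $\|\cdot\|_{L^2(E)}$ using the scaled trace inequality $\|w\|_{L^2(E)}^2 \lesssim h_E^{-1}\|w\|_{L^2(K_E)}^2 + h_E\|\nabla w\|_{L^2(K_E)}^2$ for $w = \hat u - p$, and a scaled trace/inverse estimate for the polynomial part $\pi_h(\hat u - p)$; Bramble--Hilbert on $\omega_E$ in the $W^{1/2,2}$ (equivalently, real-interpolation) scale then gives $\|\hat u - p\|_{\cdot} \lesssim h_E^{1/2}\|\hat u\|_{W^{1/2,2}(\omega_E)}$ at the right powers of $h_E$. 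The main obstacle is the bookkeeping of $h$-powers in this fractional case: one must argue on a fixed reference configuration and transport norms by affine scaling, so the delicate point is verifying that the $W^{1/2,2}$-seminorm scales like $h_E^{0}$ relative to the $L^2$-norm scaling $h_E^{1/2}$ on an edge of length $\sim h_E$, i.e. that the broken/local $W^{1/2,2}$ norm on $\omega_E$ behaves under dilation exactly as the interpolation identity $[L^2,W^{1,2}]_{1/2}$ predicts — everything else is the routine finite-element machinery recorded in \cite{bs}.
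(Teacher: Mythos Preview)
The paper does not prove this lemma at all: it merely records the two estimates as ``crucial properties of Clement interpolation'' and refers to \cite{bs}. So there is no argument in the paper to compare against; your proposal supplies what the authors deliberately outsourced.

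Your treatment of the first estimate is the standard one and is correct: local $L^4$--stability of the patch projections, an inverse estimate to control nodal values, preservation of affine functions, and Bramble--Hilbert on $\omega_K$ combine exactly as you describe, with shape regularity ensuring all constants are uniform.

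The second estimate, as you have written it, has a genuine gap. Note that in the lemma $\omega_E = \bigcup_{\overline{E'}\cap E \neq \emptyset} E'$ is a one--dimensional edge patch, not a two--dimensional element patch, and the right--hand side is the intrinsic $W^{1/2,2}$--norm on that edge set. Your argument, however, passes through the adjacent two--dimensional element $K_E$ via the scaled trace inequality $\|w\|_{L^2(E)}^2 \lesssim h_E^{-1}\|w\|_{L^2(K_E)}^2 + h_E\|\nabla w\|_{L^2(K_E)}^2$. That route naturally produces a bound by $h_E^{1/2}\|\hat u\|_{W^{1,2}(\tilde\omega_E)}$ with a \emph{two}--dimensional patch $\tilde\omega_E$, not by $\|\hat u\|_{W^{1/2,2}(\omega_E)}$. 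Worse, the two--dimensional Clement interpolant restricted to $E$ depends on $\hat u$ over the full two--dimensional nodal patches at the endpoints of $E$, so there is no hope of bounding $\|\hat u-\pi_h\hat u\|_{L^2(E)}$ purely by data on a one--dimensional set unless $\pi_h$ is reinterpreted as a one--dimensional Clement operator acting on the trace $\hat u|_{\partial\Omega}$ (which is how the estimate is actually used in the proof of Theorem~\ref{aposteriori} and how it appears in \cite{cast}). To make your argument go through, you should either (i) work entirely on the one--dimensional skeleton with a one--dimensional Clement/Scott--Zhang operator and run Bramble--Hilbert in the fractional scale $W^{1/2,2}$ there, or (ii) accept the two--dimensional $W^{1,2}$ bound and note that this is what the paper in fact needs. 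Your closing remarks about interpolation scaling are in the right spirit but do not by themselves bridge the $2$D--to--$1$D mismatch.
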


We are going to prove the following a posteriori estimate:

\begin{thm}\label{aposteriori}
\begin{eqnarray*}
& &\hspace{-0.6cm}\|\sigma - \sigma_h\|_{L^{\frac{4}{3}}(\om)}^2 +
\|(\hat u-\hat u_h)|_{\pom}+\hat v-\hat
v_h\|_{W^{\frac{1}{2},2}(\pom)}^2 + \|\mathbb{P}\nabla \hat
u-\mathbb{P}\nabla \hat
u_h\|_{L^2(\om)}^2\\
& & \hspace{-0.6cm} + \|(\xi+\xi_h)^{1/2} A \nabla(\hat u-\hat
u_h)\|_{L^2(\om)}^2 +\|\xi-\xi_h\|_{L^2(\om)}^2\\  &&\quad \lesssim
\,\, \eta_\om + \eta_C + \eta_S +
\mathrm{dist}_{\Wmg}(V^{-1}(1-K)(\hat u_h|_\pom + \hat
v_h-u_0),\Whmg)^2 \ ,
\end{eqnarray*}
where
\begin{eqnarray*}
\eta_\om& =&\sum_{K} h_K \|f\|_{L^{4/3}(K)}+\sum_{E \cap \pom =
\emptyset} h_E \|[\nu_E\cdot \sigma_h]\|_{L^2(E)}\ ,\\
\eta_C & = & \eta_{C,1}+\eta_{C,2}=\sum_{E \subset \gs }
\|(\nu_E\cdot \sigma_h)_+\|_{L^2(E)} + \sum_{E \subset \gs} \int_E
(\nu_E \cdot \sigma_h )_-\ \hat v_h\ ,\\ \eta_S& =& \sum_{E \subset
\pom}h_E^{1/2}\ \| S_h( \hat u_h|_\pom + \hat
v_h-u_0)+(\nu_\pom\cdot \sigma_h)-t_0\|_{L^2(E)}\ .
\end{eqnarray*}
\end{thm}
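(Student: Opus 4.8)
The plan is to adapt the a~priori argument of Theorem~\ref{apriori} by replacing the continuous test function $(U,V)\in\A$ and the discrete test function $(U_h,V_h)\in\A_h$ with the \emph{interpolant} $(\pi_h\hat u,\pi_h\hat v)=(\pi_h\hat u,\hat v_h)\in\A_h$, and then to localize each of the resulting terms edge--by--edge and element--by--element using integration by parts and the Clement estimates of Lemma~\ref{interpol}. First I would repeat the opening steps of the proof of Theorem~\ref{apriori} verbatim: integrating (\ref{W3}) and (\ref{W4}) and using the uniform bound on minimizers from Lemma~\ref{uniqueness} reduces the entire left--hand side $LHS^2$ to a multiple of
$$\int_\om (\sigma-\sigma_h)\nabla(\hat u-\hat u_h) + \langle S((\hat u-\hat u_h)|_\pom+\hat v-\hat v_h),(\hat u-\hat u_h)|_\pom+\hat v-\hat v_h\rangle.$$
Expanding the four products and applying the continuous variational inequality (\ref{VI}) with the specific choice $(U,V)=(\pi_h\hat u,\hat v_h)$, together with the discrete variational inequality with test function $(\hat u_h,\hat v_h)$ itself (so that no further discrete freedom is lost), collapses this to a sum of a Galerkin--residual term tested against $\hat u-\pi_h\hat u$, a contact term on $\gs$, and the Steklov--Poincar\'e consistency term $\langle(S-S_h)(\hat u_h|_\pom+\hat v_h-u_0),\cdot\rangle$. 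The last of these is controlled by (\ref{sdisc}), which produces exactly the $\mathrm{dist}_{\Wmg}(V^{-1}(1-K)(\hat u_h|_\pom+\hat v_h-u_0),\Whmg)^2$ contribution after a Young inequality absorbs the factor $\|(\hat u-\hat u_h)|_\pom+\hat v-\hat v_h\|_{\Wg}$ into $LHS^2$.

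The heart of the argument is rewriting the volume residual. Since $\hat u_h$ is piecewise linear, $\sigma_h=DW^{**}(\nabla\hat u_h)$ is piecewise constant, so for each $K$ we have $\int_K \sigma_h\nabla\varphi = -\int_K(\mathrm{div}\,\sigma_h)\varphi + \int_{\partial K}(\nu\cdot\sigma_h)\varphi = \int_{\partial K}(\nu_K\cdot\sigma_h)\varphi$. Summing over $K$ with $\varphi=\hat u-\pi_h\hat u$ and using that $\hat u-\pi_h\hat u$ and the Euler--Lagrange equation $-\mathrm{div}\,\sigma=f$ hold, the interior edges assemble into jump terms $\sum_{E\cap\pom=\emptyset}\int_E[\nu_E\cdot\sigma_h](\hat u-\pi_h\hat u)$, the volume term becomes $\int_\om f(\hat u-\pi_h\hat u)$ (which, together with the $\lambda$ terms that do not cancel, gives the $\sum_K h_K\|f\|_{L^{4/3}(K)}$ contribution after Lemma~\ref{interpol} and H\"older), and the boundary edges $E\subset\pom$ combine with the Steklov terms and $t_0$ to form the residual $S_h(\hat u_h|_\pom+\hat v_h-u_0)+(\nu_\pom\cdot\sigma_h)-t_0$ tested against $(\hat u-\pi_h\hat u)|_\pom$; applying the $L^2$--$W^{1/2,2}$ Clement estimate on each edge yields $\eta_S$. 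The contact term arises because on $\gs$ one only has the inequality in (\ref{VI}): the term $\langle S(\hat u|_\pom+\hat v),(\pi_h\hat v-\hat v)|_{\gs}\rangle$ together with the boundary residual restricted to $\gs$ produces, via the complementarity structure $\nu_\pom\cdot\sigma(\hat u|_\pom+\hat v-u_0)=0$ and $\hat v,\hat v_h\ge 0$, the two pieces $\|(\nu_E\cdot\sigma_h)_+\|_{L^2(E)}$ and $\int_E(\nu_E\cdot\sigma_h)_-\hat v_h$ that make up $\eta_C$.

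The main obstacle I anticipate is the bookkeeping on $\gs$: unlike the pure transmission case $\gt=\pom$ where (\ref{VI}) is an equality and the contact terms are absent, here one must carefully track the sign conditions so that the non--sign--definite remainder is genuinely bounded by $\eta_{C,1}+\eta_{C,2}$ and not by something that fails to vanish as $h\to 0$. Concretely, one writes $\langle(\nu_\pom\cdot\sigma_h),(\hat u-\pi_h\hat u)|_{\gs}+\hat v-\hat v_h\rangle$ after adding and subtracting the exact constraint $\hat v$ and $\hat v_h=\pi_h\hat v$, splits $\nu_E\cdot\sigma_h=(\nu_E\cdot\sigma_h)_+-(\nu_E\cdot\sigma_h)_-$, uses $\hat v_h\ge0$ to drop a favorable sign and $\hat v\ge 0$ together with the complementarity to drop another, and estimates the leftover $(\nu_E\cdot\sigma_h)_+$ against $\hat v$ by H\"older and a trace/Clement bound. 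A secondary technical point is that $\nu\cdot\sigma_h$ is only edgewise defined and need not lie in $\Wmg$, so all its pairings must be realized as honest $L^2(E)$ integrals against the edgewise--$L^2$ traces controlled by Lemma~\ref{interpol}; keeping every pairing in that concrete form, rather than in the duality $\langle\cdot,\cdot\rangle_{\pom}$, is what makes the localization legitimate. Once these sign manipulations are done, collecting the pieces and absorbing the $\varepsilon$--terms into $LHS^2$ finishes the proof.
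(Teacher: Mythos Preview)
Your overall plan---reduce $LHS^2$ to a residual, integrate by parts elementwise on $\sigma_h$, localize via Lemma~\ref{interpol}, and split the contact term into $\pm$ parts---matches the paper, but the way you invoke the two variational inequalities is reversed, and as written it does not produce the structure you claim. You propose to test the \emph{continuous} inequality (\ref{VI}) with $(U,V)=(\pi_h\hat u,\hat v_h)$ and the \emph{discrete} inequality with $(\hat u_h,\hat v_h)$ itself. The latter is trivially $0\ge0$ and gives nothing; the former yields an inequality involving $\hat u-\pi_h\hat u$, not $\hat u-\hat u_h$, so it cannot be combined with the starting expression $\int_\om(\sigma-\sigma_h)\nabla(\hat u-\hat u_h)+\langle S(\cdot),\cdot\rangle$. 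The paper does it the other way round: the continuous VI is tested with $(\hat u_h,\hat v_h)\in\A_h\subset\A$, which eliminates the $\sigma$ and $S(\hat u|_\pom+\hat v)$ contributions and leaves
\[
\lambda(\hat u-\hat u_h,\hat v-\hat v_h)-\int_\om\sigma_h\nabla(\hat u-\hat u_h)-\langle S(\hat u_h|_\pom+\hat v_h),(\hat u-\hat u_h)|_\pom+\hat v-\hat v_h\rangle;
\]
\emph{then} the discrete VI, tested with an arbitrary $(u_h,v_h)\in\A_h$, shifts the argument from $\hat u-\hat u_h$ to $\hat u-u_h$. Only after this Galerkin step may one set $(u_h,v_h)=(\pi_h\hat u,\hat v_h)$ and apply Lemma~\ref{interpol}. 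Without it the residual remains tested against $\hat u-\hat u_h$, whose $W^{1,4}$ norm is not controlled by $h$--weighted local quantities, and the argument stalls.

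A secondary point: your treatment of the contact term relies on the pointwise complementarity $\nu_\pom\cdot\sigma\,(\hat u|_\pom+\hat v-u_0)=0$, which is available only for classical solutions and is not assumed here. The paper's argument is more elementary and purely sign--based: because $\pi_h\hat v=\hat v_h$, the contact residual reduces to $\int_\gs(\nu_\pom\cdot\sigma_h)(\hat v-\hat v_h)$; the positive part is bounded directly in $L^2$ (yielding $\eta_{C,1}$), and for the negative part one uses only $\hat v\ge0$ to drop the unknown contribution, leaving $\sum_{E\subset\gs}\int_E(\nu_E\cdot\sigma_h)_-\,\hat v_h=\eta_{C,2}$. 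No complementarity of the exact solution is needed.
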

\begin{rem}
a) Also the constant prefactors, suppressed in our notation $\lesssim$, are explicitly known.\\
b) The main point of this estimate is to show that the a posteriori
estimates for the contact part (\cite{hn}) and the double--well term
(\cite{cp1}) are compatible. More sophisticated bounds related to a
different choice of $\pi_h$ generalize to our setting in a similar
way. As a simple case, a more considerate (sign--preserving) choice
of $\pi_h \hat v$ with $\|\hat v - \pi_h \hat v \|_{L^2(\gs)}
\lesssim h^\alpha\|\hat v\|_{L^2(\gs)}$ could be used to gain an
$h^\alpha$ in $\eta_{C,1}$ at the expense of modifying
$$\eta_{C,2} = \sum_{E \subset \gs} \int_E (\nu_E \cdot \sigma_h
)_-\ \pi_h^1\hat v_h\ ,$$ as long as we only assure that
$\int_E(\pi_h\hat v_h - \hat v) \leq \int_E \pi_h^1\hat v_h$ for
some auxiliary interpolation operator $\pi_h^1$ (see
e.g.~\cite{hn}).\\
c) As in \cite{scalar}, it is straight forward to introduce an
additional variable on the boundary to obtain estimates that do not
involve the incomputable difference $S_h-S$. Similarly, we might
also use the formulation of Bartels \cite{b} with explicit Young
measures in the interior part.
\end{rem}

\begin{proof}[Proof of Theorem \ref{aposteriori}] As in the proof of Theorem \ref{apriori}, we start with the
inequality
\begin{eqnarray*}
& &  \hspace*{-0.9cm}LHS^2:=\|\sigma - \sigma_h\|_{L^{\frac{4}{3}}(\om)}^2 + \|(\hat u-\hat u_h)|_{\pom}+\hat v-\hat v_h\|_{W^{\frac{1}{2},2}(\pom)}^2 \\
& & + \|\mathbb{P}\nabla \hat u-\mathbb{P}\nabla \hat
u_h\|_{L^2(\om)}^2 + \|(\xi+\xi_h)^{1/2} A \nabla(\hat u-\hat
u_h)\|_{L^2(\om)}^2 +\|\xi-\xi_h\|_{L^2(\om)}^2\\ & \lesssim &
\int_\om (\sigma - \sigma_h) \nabla (\hat u-\hat u_h) + \langle
S((\hat u-\hat u_h)|_\pom + \hat v-\hat v_h), (\hat u-\hat
u_h)|_\pom + \hat v-\hat v_h\rangle .
\end{eqnarray*}
Using the variational inequality and its discretized variant results
in
\begin{eqnarray*}
LHS^2 &\lesssim&  \lambda(\hat u-\hat u_h, \hat v-\hat v_h) -
\int_\om \sigma_h \nabla(\hat u-\hat u_h)-\langle S(\hat u_h|_\pom +
\hat
v_h,(\hat u-\hat u_h)|_\pom + \hat v-\hat v_h \rangle \\
&=&  \lambda(\hat u-\hat u_h, \hat v-\hat v_h) - \int_\om \sigma_h
\nabla(\hat u-\hat u_h)-\langle S_h(\hat u_h|_\pom + \hat
v_h,(\hat u-\hat u_h)|_\pom + \hat v-\hat v_h \rangle \\
& & \quad + \langle (S_h-S)(\hat u_h|_\pom + \hat v_h,(\hat u-\hat
u_h)|_\pom + \hat v-\hat v_h \rangle\\
&\leq& \lambda(\hat u-u_h, \hat v-v_h)-\int_\om \sigma_h\nabla(\hat
u -u_h) -\langle S_h(\hat u_h|_\pom + \hat v_h, (\hat u- u_h)|_\pom
+ \hat v-v_h \rangle\\& & \qquad +\ \langle (S_h-S)(\hat u_h|_\pom +
\hat v_h-u_0), (\hat u-\hat u_h)|_\pom + \hat v-\hat v_h \rangle\\
&=&  \int_\om f(\hat u - u_h) - \sum_{E\cap \pom = \emptyset} \int_E
[\nu_E\cdot \sigma_h] (\hat u - u_h)\\& &  \quad -\ \langle S_h(
\hat u_h|_\pom + \hat v_h-u_0)+(\nu_\pom\cdot \sigma_h)-t_0), (\hat
u -
u_h)|_\pom + \hat v - v_h\rangle\\
& & \quad +\ \int_\gs (\nu_\pom \cdot \sigma_h )\ (\hat v - v_h)\\
& & \quad +\langle (S_h-S)(\hat u_h|_\pom + \hat v_h - u_0), (\hat
u-\hat u_h)|_\pom + \hat v-\hat v_h \rangle
\end{eqnarray*}
for all $(u_h, v_h) \in \A_h$. Here, $\nu_E$ and $\nu_\pom$ denote
the outward--pointing unit normal vector to an edge $E \subset
\overline{K}$, resp.~to $\pom$, and $[\nu_E\cdot \sigma_h]$ is the
jump of the discretized normal stress across $E$. According to
estimate (\ref{sdisc}) for $S_h-S$ and Young's inequality, the last
term contributes the not explicitly computable
$\mathrm{dist}_{\Wmg}(V^{-1}(1-K)(\hat u_h|_\pom + \hat
v_h),\Whmg)^2$.

We are going to choose $(u_h, v_h) = (\pi_h\hat u, \pi_h \hat v)$.
Then, the first three terms on the right hand side can be estimated
with the help of Lemma \ref{interpol} and H\"{o}lder's inequality:
$$\int_\om f(\hat u - \pi_h\hat u) \leq \|\hat u\|_{\W}\ \Big(\sum_{K} h_K^{4/3} \int_K |f|^{4/3}\Big)^{3/4}\ ,$$
$$\Big|\sum_{E \cap \pom = \emptyset}\int_E [\nu_E\cdot \sigma_h] (\hat u - \pi_h\hat u)\Big| \leq \|\hat u\|_{\Wg}
\ \Big( \sum_{E \cap \pom = \emptyset} h_E \int_E |[\nu_E\cdot
\sigma_h]|^{2}\Big)^{1/2}$$ and
\begin{eqnarray*}
& &\hspace{-1.0cm}|\langle S_h( \hat u_h|_\pom + \hat
v_h-u_0)+(\nu_\pom\cdot \sigma_h)-t_0), (\hat u - \pi_h\hat u)|_\pom
+ \hat v - \pi_h\hat v\rangle|
\\& &\hspace{-1.0cm} \leq (\| \hat u\|_{\Wg} + \|\hat v\|_{\Wg} + \|\hat v_h\|_{\Wg})\ \| S_h( \hat u_h|_\pom +
\hat v_h-u_0)+(\nu_\pom\cdot \sigma_h)-t_0\|_{\Wmg}\ .
\end{eqnarray*}

 Note that the boundedness of the
set of minimizers, Lemma \ref{uniqueness}, provides an explicit
uniform bound on both $\|\hat u, \hat v\|_{X}$ and $\|\hat u_h, \hat
v_h\|_{X}$ in terms of the norms of the data. The $\Wmg$--norm leads
to $\eta_S$ (\cite{cast}).

The remaining term requires a slightly more precise analysis.
Decompose
$$(\nu_\pom \cdot \sigma_h ) = (\nu_\pom \cdot \sigma_h )_+- (\nu_\pom \cdot \sigma_h )_-$$
into its positive and negative parts. For a classical exact
solution, the Signiorini condition requires $(\nu_\pom \cdot
\sigma)_+=0$, and we estimate the corresponding term as above:
$$\Big|\int_\gs (\nu_\pom \cdot \sigma_h )_+\ (\hat v - \pi_h \hat v)\Big|
\lesssim (\|\hat v\|_{\Wg}+\|\hat v_h\|_{\Wg})\ \Big( \int_\gs
|(\nu_E\cdot \sigma_h)_+|^{2}\Big)^{1/2}\ .$$ For the negative part,
we may drop the unknown term:
\begin{eqnarray*}
-\int_\gs (\nu_\pom \cdot \sigma_h )_-\ (\hat v - v_h)& =&\sum_{E
\subset \gs}(\nu_E \cdot \sigma_h )_- \int_E(v_h-\hat v)\\
&\leq& \sum_{E \subset \gs}(\nu_E \cdot \sigma_h )_- \int_E \hat
v_h.
\end{eqnarray*}
The a posteriori estimate follows.
\end{proof}

\end{document}